\definecolor{webgreen}{rgb}{0,.5,0}
\definecolor{webbrown}{rgb}{.6,0,0}
\newcommand{\blue}[1]{{\color{blue}#1}}
\newcommand{\red}[1]{{\color{red}#1}}
\begin{document}

\theoremstyle{plain}
\newtheorem{theorem}{Theorem}
\newtheorem{corollary}[theorem]{Corollary}
\newtheorem{lemma}[theorem]{Lemma}
\newtheorem{proposition}[theorem]{Proposition}

\theoremstyle{definition}
\newtheorem{definition}[theorem]{Definition}
\newtheorem{example}[theorem]{Example}
\newtheorem{conjecture}[theorem]{Conjecture}

\theoremstyle{remark}
\newtheorem*{remark}{Remark}

\begin{center}
\vskip 1cm{\bf  
\large On the sum of the sixth powers of Fibonacci numbers}
\bigskip

Kunle Adegoke \\ 
Department of Physics and Engineering Physics\\
Obafemi Awolowo University, Ile-Ife, Nigeria \\
\href{mailto:adegoke00@gmail.com }{\tt adegoke00@gmail.com}\\
\ \\
Olawanle Layeni\\
Department of Mathematics\\
Obafemi Awolowo University, Ile-Ife, Nigeria\\
\href{mailto:olayeni@oauife.edu.ng}{\tt olayeni@oauife.edu.ng}\\

\end{center}

\begin{abstract}
\noindent Let $(G_k)_{k\in\mathbb Z}$ be any sequence obeying the  recurrence relation of the Fibonacci numbers. We derive formulas for $\sum_{j=1}^n{G_{j + t}^6}$ and $\sum_{j=1}^n{(-1)^{j - 1}G_{j + t}^5(G_{j + t - 1} + G_{j + t + 1})}$, thereby extending the results of Ohtsuka and Nakamura who found simple formulas for $\sum_{j=1}^n{F_j^6}$ and $\sum_{j=1}^n{L_j^6}$, where $F_k$ and $L_k$ are the $k$th Fibonacci and Lucas numbers. We also evaluate $\sum_{j = 1}^n {G_{j + t}^3 G_{j + t + 1}^3 } $ and $\sum_{j = 1}^n {G_{j + t - 1}^2 G_{j + t} G_{j + t + 1} G_{j + t + 2}^2 } $, of which the results of Treeby are particular cases.

\end{abstract}
\noindent 2010 {\it Mathematics Subject Classification}:
Primary 11B39; Secondary 11B37.

\noindent \emph{Keywords: }
Fibonacci number, Lucas number, summation identity, sixth power.

\section{Introduction}
Let $F_k$ and $L_k$ be the $k$th Fibonacci and Lucas numbers. 

In the year 2010, Ohtsuka and Nakamura~\cite{ohtsuka2010} derived the following formulas for the sum of the sixth powers of Fibonacci and Lucas numbers.
\begin{gather}
\sum_{j = 1}^n {F_j^6 }  = \frac{{F_n^5 F_{n + 3}  + F_{2n} }}{4}\label{eq.rn4ckio},\\
\sum_{j = 1}^n {L_j^6 }  = \frac{{L_n^5 L_{n + 3}  + 125F_{2n} }}{4} - 32\label{eq.fxdxuyy}.
\end{gather}

We extend~\eqref{eq.rn4ckio} and~\eqref{eq.fxdxuyy} as follows:
\begin{gather*}
\sum_{j = 1}^n {F_{j + t}^6 }  = \frac{{F_{n + t}^5 F_{n + t + 3}  - F_t^5 F_{t + 3} }}{4} + \frac{{F_{2n + 2t}  - F_{2t} }}{4},\\
\sum_{j = 1}^n {L_{j + t}^6 }  = \frac{{L_{n + t}^5 L_{n + t + 3}  - L_t^5 L_{t + 3} }}{4} + \frac{{125(F_{2n + 2t}  - F_{2t} )}}{4}.
\end{gather*}
Let $(G_j)_{j\in\mathbb Z}$ be the generalized Fibonacci sequence having the same recurrence relation as the Fibonacci and Lucas sequences but starting with arbitrary initial values; that is, let
\begin{equation}
G_j  = G_{j - 1}  + G_{j - 2},\quad (j \ge 2),
\end{equation}
where $G_0$ and $G_1$ are arbitrary numbers (usually integers) not both zero; with
\begin{equation*}
G_{-j} = G_{-(j - 2)} - G_{-(j - 1)}.
\end{equation*} 

Our goal in this note is to derive the following identity:
\begin{equation*}
\begin{split}
\sum_{j = 1}^n {G_{j + t}^6 } & = \frac{G_{n + t}^5 G_{n + t + 3}  - G_t^5 G_{t + 3}}4\\ 
 &\qquad + \frac{e ^2 \left( {G_{n + t} (G_{n + t + 1}  + G_{n + t - 1} ) - G_t (G_{t + 1}  + G_{t - 1} )} \right)}4,
\end{split}
\end{equation*}
where $e=G_0^2 - G_1^2 + G_0G_1$ here and throughout this paper; and $t$ and $n$ are any integers.

We will also derive the following results:
\begin{align*}
\sum_{j = 1}^n {( - 1)^{j - 1} F_j^5 L_j }  &= \frac{{( - 1)^{n + 1} }}{2}F_n^2 F_{n + 1}^2 \left( {F_{n + 1}^2  - F_n F_{n + 3} } \right),\\
\sum_{j = 1}^n {( - 1)^{j - 1} L_j^5 F_j }  &= \frac{{( - 1)^{n + 1} }}{{10}}L_n^2 L_{n + 1}^2 \left( {L_{n + 1}^2  - L_n L_{n + 3} } \right) + \frac{{14}}{5}.
\end{align*}
More generally we will evaluate $\sum_{j = 1}^n {( - 1)^{j - 1} G_{j + t}^5 \left( {G_{j + t + 1}  + G_{j + t - 1} } \right)}$.

Finally, we will establish
\begin{equation*}
\sum_{j = 1}^n {G_{j + t}^3 G_{j + t + 1}^3 }  = \frac{1}{4}\left( {G_{n + t}^2 G_{n + t + 1}^2 G_{n + t + 2}^2  - G_t^2 G_{t + 1}^2 G_{t + 2}^2 } \right),
\end{equation*}
and
\begin{equation*}
\sum_{j = 1}^n {\frac{1}{{G_{j + t - 1}^2 G_{j + t} G_{j + t + 1} G_{j + t + 2}^2 }}}  = \frac{1}{4}\left( {\frac{1}{{G_t^2 G_{t + 1}^2 G_{t + 2}^2 }} - \frac{1}{{G_{n + t}^2 G_{n + t + 1}^2 G_{n + t + 2}^2 }}} \right);
\end{equation*}
thereby generalizing the results of Treeby~\cite{treeby16} who found formulas for $\sum_{j = 1}^n {F_j^3 F_{j + 1}^3 }$ and $\sum_{j = 1}^n {L_j^3 L_{j + 1}^3 }$.

We require the following telescoping summation identities:
\begin{equation}\label{scope1}
\sum_{j = 1}^n {(f_{j + 1}  - f_j )}  = f_{n + 1}  - f_1,
\end{equation}
and
\begin{equation}\label{scope2}
\sum_{j = 1}^n {( - 1)^{j - 1} (f_{j + 1}  + f_j )}  = ( - 1)^{n + 1} f_{n + 1}  + f_1.
\end{equation}
We will also make use of the following well-known identities:
\begin{equation}\label{vajda1}
G_r G_{r + 2}  = G_{r + 1}^2  + ( - 1)^re,\quad\text{\cite[Formula 28]{vajda}},
\end{equation}
and
\begin{equation}\label{vajda2}
G_{r + s}  + ( - 1)^s G_{r - s}  = L_s G_r,\quad\text{\cite[Formula 10a]{vajda}};
\end{equation}
valid for all integers $r$ and $s$.

\section{Main results}
In this section, we evaluate $\sum_{j = 1}^n {G_{j + t}^6 }$. First we state two lemmata.
\begin{lemma}
If $r$ is an integer, then
\begin{equation}\label{eq.ytugbqv}
G_{r - 2} G_{r + 2}=G_r^2  + ( - 1)^r e.
\end{equation}
\end{lemma}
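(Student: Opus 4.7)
The plan is to reduce the claimed identity to a shifted instance of \eqref{vajda1}. Specifically, I would like to rewrite $G_{r-2}G_{r+2}$ entirely in terms of $G_{r-1}$, $G_r$, and $G_{r+1}$ using the recurrence, and then invoke \eqref{vajda1} at index $r-1$ to finish.

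The key steps run as follows. First, from the recurrence I have $G_{r-2}=G_r-G_{r-1}$ and $G_{r+2}=G_r+G_{r+1}$, so expanding the product gives
\[
G_{r-2}G_{r+2} = G_r^2 + G_r(G_{r+1}-G_{r-1}) - G_{r-1}G_{r+1}.
\]
Because $G_{r+1}-G_{r-1}=G_r$, the middle term simplifies to $G_r^2$, and therefore
\[
G_{r-2}G_{r+2} = 2G_r^2 - G_{r-1}G_{r+1}.
\]
Applying \eqref{vajda1} with $r$ replaced by $r-1$ now gives $G_{r-1}G_{r+1}=G_r^2+(-1)^{r-1}e$; substituting this produces $G_{r-2}G_{r+2}=2G_r^2 - G_r^2 - (-1)^{r-1}e = G_r^2+(-1)^r e$, which is \eqref{eq.ytugbqv}.

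An alternative route starts from \eqref{vajda2} with $s=2$, namely $G_{r+2}+G_{r-2}=L_2 G_r=3G_r$, to eliminate $G_{r+2}$; after that substitution the problem again reduces to a shifted instance of \eqref{vajda1}, but with slightly more bookkeeping. Either way, there is no genuine obstacle here: the lemma is a short algebraic manipulation, and the only point that requires care is tracking the sign, since $(-1)^{r-1}=-(-1)^r$ when \eqref{vajda1} is invoked at the shifted index.
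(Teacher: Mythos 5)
Your proof is correct and takes essentially the same route as the paper's: factor $G_{r-2}=G_r-G_{r-1}$ and $G_{r+2}=G_r+G_{r+1}$, reduce the cross term to $G_r^2$ via the recurrence, and invoke \eqref{vajda1} at the shifted index $r-1$ with the sign flip $(-1)^{r-1}=-(-1)^r$. No gaps.
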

\begin{proof}
We have
\begin{equation*}
\begin{split}
G_{r - 2} G_{r + 2}  &= \left( {G_r  - G_{r - 1} } \right)\left( {G_r  + G_{r + 1} } \right)\\
& = G_r^2  + (G_r G_{r + 1}  - G_{r - 1} G_r ) - G_{r - 1} G_{r + 1} \\
& = G_r^2  + G_r^2  - G_{r - 1} G_{r + 1} \\
& = G_r^2  + G_r^2  - \left( {G_r^2  - ( - 1)^r e } \right),\quad\text{by~\eqref{vajda1}},\\
& = G_r^2  + ( - 1)^r e.
\end{split}
\end{equation*}
\end{proof}
\begin{lemma}
If $j$ and $t$ are any integers, then
\begin{equation}\label{gsquared}
\sum_{j=1}^n{G_{j + t}^2}= {G_{n + t} G_{n + t + 1}  - G_t G_{t + 1} } .
\end{equation}
\end{lemma}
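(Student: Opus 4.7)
The plan is to recognize the left-hand side as a telescoping sum and then invoke the identity \eqref{scope1}. Using the defining recurrence in the form $G_{j+t} = G_{j+t+1} - G_{j+t-1}$, I would rewrite
\begin{equation*}
G_{j+t}^2 = G_{j+t}\bigl(G_{j+t+1} - G_{j+t-1}\bigr) = G_{j+t}G_{j+t+1} - G_{j+t-1}G_{j+t}.
\end{equation*}
This is precisely $f_{j+1} - f_j$ with the choice $f_j = G_{j+t-1}G_{j+t}$.

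With that observation in hand, I would apply the telescoping identity \eqref{scope1} directly:
\begin{equation*}
\sum_{j=1}^n G_{j+t}^2 = \sum_{j=1}^n (f_{j+1} - f_j) = f_{n+1} - f_1 = G_{n+t}G_{n+t+1} - G_t G_{t+1},
\end{equation*}
which is the desired formula.

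There is essentially no obstacle here; the only thing to verify is the algebraic step that splits $G_{j+t}^2$ into consecutive products of $G$'s, and this follows from a single application of the recurrence. Neither \eqref{vajda1} nor \eqref{vajda2} is needed for this lemma, and the result holds for all integers $t$ and $n$ because the recurrence is valid for all integer indices under the two-sided extension defined in the introduction.
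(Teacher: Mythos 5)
Your proof is correct and is essentially identical to the paper's: both rewrite $G_{j+t}^2$ as $G_{j+t}G_{j+t+1} - G_{j+t-1}G_{j+t}$ via the recurrence and telescope with \eqref{scope1} using $f_j = G_{j+t-1}G_{j+t}$. No issues.
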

\begin{proof}
Use~\eqref{scope1} to sum the identity
\begin{equation*}
G_{j + t}^2=G_{j + t + 1}G_{j + t} - G_{j + t}G_{j + t -1}.
\end{equation*}
\end{proof}
\begin{theorem}
If $n$ and $t$ are integers, then
\begin{equation}\label{sixth}
\begin{split}
\sum_{j = 1}^n {G_{j + t}^6 } & = \frac{G_{n + t}^5 G_{n + t + 3}  - G_t^5 G_{t + 3}}4\\ 
 &\qquad + \frac{e ^2 \left( {G_{n + t} (G_{n + t + 1}  + G_{n + t - 1} ) - G_t (G_{t + 1}  + G_{t - 1} )} \right)}4.
\end{split}
\end{equation}

\end{theorem}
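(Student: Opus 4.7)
The plan is to establish the one-step difference identity
\begin{equation*}
4G_{j+t}^6 = \bigl[G_{j+t}^5 G_{j+t+3} - G_{j+t-1}^5 G_{j+t+2}\bigr] + e^2\bigl[G_{j+t}(G_{j+t+1}+G_{j+t-1}) - G_{j+t-1}(G_{j+t}+G_{j+t-2})\bigr]
\end{equation*}
and then to collapse the sum over $j$ via the telescoping rule~\eqref{scope1}, finally dividing by~$4$. Writing $r=j+t$ for brevity, I would first apply~\eqref{vajda2} with $s=3$ (so that $L_3=4$) to obtain $G_{r+3}-G_{r-3}=4G_r$, whence
\begin{equation*}
4G_r^6 = G_r^5 G_{r+3} - G_r^5 G_{r-3}.
\end{equation*}
A short expansion using the recursion gives
$G_r(G_{r+1}+G_{r-1}) - G_{r-1}(G_r+G_{r-2}) = G_r G_{r+1} - G_{r-1}G_{r-2} = G_r^2 + G_{r-1}^2$, so the target difference identity is equivalent to the single claim
\begin{equation*}
G_{r-1}^5 G_{r+2} - G_r^5 G_{r-3} = e^2\bigl(G_r^2 + G_{r-1}^2\bigr).
\end{equation*}

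To prove this, I would substitute $G_{r+2}=2G_r+G_{r-1}$ and $G_{r-3}=2G_{r-1}-G_r$ (both immediate from the recursion) into the left side and expand to
$G_r^6 + G_{r-1}^6 - 2G_rG_{r-1}(G_r^4-G_{r-1}^4)$. The factorisations $a^3+b^3=(a+b)(a^2-ab+b^2)$ with $a=G_r^2$, $b=G_{r-1}^2$, and $a^4-b^4=(a^2+b^2)(a^2-b^2)$, pull out a common factor $(G_r^2+G_{r-1}^2)$; cancelling it reduces the claim to
\begin{equation*}
G_r^4 - G_r^2 G_{r-1}^2 + G_{r-1}^4 - 2G_rG_{r-1}(G_r^2-G_{r-1}^2) = e^2.
\end{equation*}

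The crux of the argument, and the step I anticipate as the main obstacle, is recognising the left side above as the perfect square $(G_r^2-G_{r-1}^2-G_rG_{r-1})^2$. Once this is spotted, the base simplifies via $G_r^2-G_{r-1}^2-G_rG_{r-1}=G_rG_{r-2}-G_{r-1}^2$, and applying~\eqref{vajda1} with the shift $r\mapsto r-2$ shows this expression equals $(-1)^r e$, so the perfect square is exactly $e^2$. Summing the resulting difference identity over $j$ from $1$ to $n$, applying~\eqref{scope1} to each bracket, and dividing by $4$ yields~\eqref{sixth}.
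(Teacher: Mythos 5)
Your proof is correct, and it takes a genuinely different route from the paper's. Both arguments are ultimately telescoping, but the decompositions differ: the paper telescopes the product of six consecutive terms $f_j = G_{j+t-3}G_{j+t-2}\cdots G_{j+t+2}$, simplifies the resulting summand via \eqref{vajda2} with $s=3$, \eqref{vajda1} and \eqref{eq.ytugbqv} to $4(G_{j+t}^6 - e^2G_{j+t}^2)$, and then invokes the separate Lemma \eqref{gsquared} to dispose of the $\sum G_{j+t}^2$ term. You instead guess the antidifference directly from the claimed closed form and verify the one-step identity $4G_r^6 = \bigl[G_r^5G_{r+3}-G_{r-1}^5G_{r+2}\bigr] + e^2\bigl[G_rG_{r+1}-G_{r-1}G_{r-2}\bigr]$ by brute-force expansion; the crux is your reduction, after factoring out $(G_r^2+G_{r-1}^2)$, to the perfect square $(G_r^2 - G_{r-1}^2 - G_rG_{r-1})^2 = (G_rG_{r-2}-G_{r-1}^2)^2 = e^2$, which is a nice repackaging of the Catalan-type identity \eqref{vajda1}. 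I checked your algebra: the expansion $G_{r-1}^5(2G_r+G_{r-1}) - G_r^5(2G_{r-1}-G_r) = G_r^6+G_{r-1}^6-2G_rG_{r-1}(G_r^4-G_{r-1}^4)$ is right, the square identity holds, and the telescoping and division by $4$ recover \eqref{sixth} exactly. The paper's route is more constructive (it discovers the formula rather than verifying it) and reuses its lemmas; yours is self-contained apart from \eqref{vajda1} and \eqref{vajda2}, at the cost of heavier expansion and of needing the answer in advance. One cosmetic point: in your factorisation step you switch between $a=G_r^2,\ b=G_{r-1}^2$ for the sum of cubes and $a=G_r,\ b=G_{r-1}$ for $a^4-b^4$; the intent is clear but the notation should be made consistent.
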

In particular,
\begin{gather}
\sum_{j = 1}^n {F_{j + t}^6 }  = \frac{{F_{n + t}^5 F_{n + t + 3}  - F_t^5 F_{t + 3} }}{4} + \frac{{F_{2n + 2t}  - F_{2t} }}{4},\\
\sum_{j = 1}^n {L_{j + t}^6 }  = \frac{{L_{n + t}^5 L_{n + t + 3}  - L_t^5 L_{t + 3} }}{4} + \frac{{125(F_{2n + 2t}  - F_{2t} )}}{4}.
\end{gather}

\begin{proof}
Use \eqref{scope1} with $f_j  = G_{j + t - 3} G_{j + t - 2} G_{j + t - 1} G_{j + t} G_{j + t + 1} G_{j + t + 2} $ to obtain
\begin{equation*}
\begin{split}
&\sum_{j = 1}^n {G_{j + t - 2} G_{j + t - 1} G_{j + t} G_{j + t + 1} G_{j + t + 2} \left( {G_{j + t + 3}  - G_{j + t - 3} } \right)} \\
&\qquad\qquad = G_{n + t - 2} G_{n + t - 1} G_{n + t} G_{n + t + 1} G_{n + t + 2} G_{n + t + 3}  - G_{t - 2} G_{t - 1} G_t G_{t + 1} G_{t + 2} G_{t + 3} ;
\end{split}
\end{equation*}
that is
\begin{equation*}
\begin{split}
&\sum_{j = 1}^n {G_{j + t} (G_{j + t + 3}  - G_{j + t - 3} )\red{G_{j + t - 1} G_{j + t + 1}}\blue{ G_{j + t - 2} G_{j + t + 2}} }\\
&\qquad\qquad = G_{n + t}G_{n + t + 3}\red{G_{n + t - 1}G_{n + t + 1}}\blue{G_{n + t - 2}  G_{n + t + 2}}   - G_tG_{t + 3}\blue{G_{t - 1}G_{t + 1}}\red{G_{t - 2}G_{t + 2}}  ;
\end{split}
\end{equation*}
which becomes
\begin{equation*}
\begin{split}
&4\sum_{j = 1}^n {G_{j + t}^2 \left( {G_{j + t}^2  - ( - 1)^{j + t} e } \right)\left( {G_{j + t}^2  + ( - 1)^{j + t} e } \right)}\\ 
 &\qquad= G_{n + t} G_{n + t + 3} \left( {G_{n + t}^2  - ( - 1)^{n + t} e } \right)\left( {G_{n + t}^2  + ( - 1)^{n + t} e } \right)\\
&\qquad\qquad + G_t G_{t + 3} \left( {G_t^2  - ( - 1)^t e } \right)\left( {G_t^2  + ( - 1)^t e } \right);
\end{split}
\end{equation*}
with the use of \eqref{vajda2} with $s=3$,~\eqref{vajda1}, and~\eqref{eq.ytugbqv}.

Thus,
\begin{equation*}
4\sum_{j = 1}^n {\left( {G_{j + t}^6  - e ^2 G_{j + t}^2 } \right)}  = G_{n + t} G_{n + t + 3} \left( {G_{n + t}^4  - e ^2 } \right) - G_t G_{t + 3} \left( {G_t^4  - e ^2 } \right);
\end{equation*}
so that
\begin{equation*}
\sum_{j = 1}^n {G_{j + t}^6 } = \frac{{G_{n + t} G_{n + t + 3} (G_{n + t}^4  - e ^2 )}}{4} - \frac{{G_t G_{t + 3} (G_t^4  - e ^2 )}}{4} + e^2\sum_{j=1}^n{G_{j + t}^2};
\end{equation*}
which upon inserting~\eqref{gsquared} gives
\begin{equation}\label{eq.uaal7by}
\begin{split}
\sum_{j = 1}^n {G_{j + t}^6 } & = \frac{{G_{n + t} G_{n + t + 3} (G_{n + t}^4  - e ^2 )}}{4} - \frac{{G_t G_{t + 3} (G_t^4  - e ^2 )}}{4}\\
&\qquad\qquad + e ^2 \left( {G_{n + t} G_{n + t + 1}  - G_t G_{t + 1} } \right).
\end{split}
\end{equation} 
Simplication of~\eqref{eq.uaal7by} gives~\eqref{sixth} since
\begin{equation*}
4G_{s + 1}  - G_{s + 3}  = 3G_{s + 1}  - G_{s + 2}  = 2G_{s + 1}  - G_s  = G_{s + 1}  + G_{s - 1}.
\end{equation*}
\end{proof}

\section{Additional results}
We proceed to evaluate a couple of degree $6$ sums.
\begin{lemma}
If $j$ is an integer, then
\begin{align}
G_j^2 G_{j + 1}^2 G_{j + 2}^2  + G_{j - 1}^2 G_j^2 G_{j + 1}^2  &= 2G_j^4 G_{j + 1}^2  + 2G_{j + 1}^4 G_j^2,\label{eq.ltf416x}\\
G_j^2 G_{j + 1}^2 G_{j + 2}^2  - G_{j - 1}^2 G_j^2 G_{j + 1}^2  &= 4G_j^3 G_{j + 1}^3\label{eq.trkcg03}.
\end{align}
\end{lemma}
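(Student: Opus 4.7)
The plan is to exploit the common factor $G_j^2 G_{j+1}^2$ appearing in every term of both identities. Dividing through by this factor reduces \eqref{eq.ltf416x} and \eqref{eq.trkcg03} to the much cleaner statements
\begin{equation*}
G_{j+2}^2 + G_{j-1}^2 = 2\bigl(G_j^2 + G_{j+1}^2\bigr) \quad\text{and}\quad G_{j+2}^2 - G_{j-1}^2 = 4\,G_j G_{j+1},
\end{equation*}
respectively.

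Next, I would invoke the recurrence in its forward and backward forms, namely $G_{j+2} = G_{j+1} + G_j$ and $G_{j-1} = G_{j+1} - G_j$ (the latter being valid on all of $\mathbb{Z}$ by the extension given in the introduction). Setting $x = G_{j+1}$ and $y = G_j$, the two reduced identities become the textbook algebraic relations $(x+y)^2 + (x-y)^2 = 2(x^2+y^2)$ and $(x+y)^2 - (x-y)^2 = 4xy$, from which the lemma is immediate.

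There is essentially no obstacle here; the lemma is a short algebraic manipulation once the common factor is extracted, with no appeal to \eqref{vajda1} or \eqref{vajda2} needed. Its role appears to be purely preparatory for the next section: identity \eqref{eq.trkcg03} is the \emph{difference} form that telescopes via \eqref{scope1} to give the announced closed form for $\sum G_{j+t}^3 G_{j+t+1}^3$, while \eqref{eq.ltf416x} is an auxiliary rearrangement needed in parallel to handle the companion sum.
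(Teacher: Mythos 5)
Your proof is correct and takes essentially the same route as the paper: the paper squares-and-adds, respectively multiplies, the pair $G_{j+2}-G_{j-1}=2G_j$ and $G_{j+2}+G_{j-1}=2G_{j+1}$, which is precisely your $(x+y)^2\pm(x-y)^2$ computation with $x=G_{j+1}$, $y=G_j$, followed by restoring the common factor $G_j^2G_{j+1}^2$. (Only cosmetic caveat: rather than ``dividing'' by $G_j^2G_{j+1}^2$, which could vanish, present the argument as proving the reduced identities and then multiplying through --- exactly as the paper does.)
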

\begin{proof}
Square and add the identities
\begin{equation}
G_{j + 2} - G_{j -1}=2G_j\label{eq.vr34gut}
\end{equation}
and
\begin{equation}
G_{j + 2} + G_{j -1}=2G_{j + 1},\label{eq.w8ooqde}
\end{equation}
to obtain
\begin{equation}
G_{j + 2}^2  + G_{j - 1}^2  = 2\left( {G_j^2  + G_{j + 1}^2 } \right),
\end{equation}
and hence~\eqref{eq.ltf416x}. Multiplication of~\eqref{eq.vr34gut} and~\eqref{eq.w8ooqde} gives
\begin{equation*}
G_{j + 2}^2-G_{j - 1}^2=4G_jG_{j + 1},
\end{equation*}
from which~\eqref{eq.trkcg03} follows.
\end{proof}
\begin{proposition}
If $n$ and $t$ are integers, then
\begin{equation}
\begin{split}
\sum_{j = 1}^n {( - 1)^{j - 1} G_{j + t}^5 \left( {G_{j + t + 1}  + G_{j + t - 1} } \right)} & = \frac{{( - 1)^{n + 1} }}{2} {G_{n + t}^2 G_{n + t + 1}^2 G_{n + t + 2}^2  + \frac12G_t^2 G_{t + 1}^2 G_{t + 2}^2 } \\
&\qquad + ( - 1)^n G_{n + t + 1}^4 G_{n + t}^2  - G_{t + 1}^4 G_t^2 .
\end{split}
\end{equation}
\end{proposition}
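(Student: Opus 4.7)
The plan is to apply the telescoping identity~\eqref{scope2} to a judiciously chosen sequence $f_j$ so that $f_{j+1}+f_j$ reproduces the summand $G_{j+t}^5(G_{j+t+1}+G_{j+t-1})$. The guiding observation is the factorization
\[
G_{j+t+1}^2 - G_{j+t-1}^2 = (G_{j+t+1}-G_{j+t-1})(G_{j+t+1}+G_{j+t-1}) = G_{j+t}\bigl(G_{j+t+1}+G_{j+t-1}\bigr),
\]
which rewrites the summand as $G_{j+t}^4 G_{j+t+1}^2 - G_{j+t-1}^2 G_{j+t}^4$. The task reduces to exhibiting each of these two monomials as one piece of a sum of the form $f_{j+1}+f_j$.

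First, set $\phi_j := G_{j+t-1}^2 G_{j+t}^2 G_{j+t+1}^2$; then Lemma~\eqref{eq.ltf416x} (shifted by $t$) yields $\tfrac12(\phi_j+\phi_{j+1}) = G_{j+t}^4 G_{j+t+1}^2 + G_{j+t+1}^4 G_{j+t}^2$. Second, set $\psi_j := G_{j+t-1}^2 G_{j+t}^4$, so that $\psi_j+\psi_{j+1} = G_{j+t-1}^2 G_{j+t}^4 + G_{j+t}^2 G_{j+t+1}^4$. The decisive observation is that the ``spurious'' term $G_{j+t+1}^4 G_{j+t}^2$ in the first expression is literally the same as the ``spurious'' term $G_{j+t}^2 G_{j+t+1}^4$ in the second, so the two cancel under subtraction. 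Taking $f_j := \tfrac12\phi_j - \psi_j$ therefore gives exactly $f_{j+1}+f_j = G_{j+t}^4 G_{j+t+1}^2 - G_{j+t-1}^2 G_{j+t}^4 = G_{j+t}^5(G_{j+t+1}+G_{j+t-1})$.

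From here the remainder is bookkeeping: applying~\eqref{scope2} to this $f_j$ produces $(-1)^{n+1}f_{n+1}+f_1$, which after expanding $\phi_{n+1},\phi_1,\psi_{n+1},\psi_1$ and using $-(-1)^{n+1}=(-1)^n$ gives precisely the stated right-hand side (after trivially commuting factors such as $G_{n+t}^2 G_{n+t+1}^4 = G_{n+t+1}^4 G_{n+t}^2$). The main obstacle is not calculational but structural: one must guess the correct combination $\tfrac12\phi_j-\psi_j$, whose design is forced by the appearance of the triple-product term $G_{n+t}^2 G_{n+t+1}^2 G_{n+t+2}^2$ in the target (pointing to $\phi$) alongside the quartic term $G_{n+t+1}^4 G_{n+t}^2$ (pointing to $\psi$). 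Once this ansatz is written down, verifying $f_{j+1}+f_j$ uses only Lemma~\eqref{eq.ltf416x} and elementary algebra, and~\eqref{scope2} completes the proof.
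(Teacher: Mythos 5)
Your proof is correct and follows essentially the same route as the paper: both rest on Lemma~\eqref{eq.ltf416x} combined with the alternating telescoping identity~\eqref{scope2}. In fact your write-up is more explicit than the paper's one-line proof, since you supply the factorization $G_{j+t}(G_{j+t+1}+G_{j+t-1})=G_{j+t+1}^2-G_{j+t-1}^2$ and the auxiliary telescoping term $\psi_j$ that the paper leaves implicit.
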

\begin{proof}
Write $j + t$ for $j$ in~\eqref{eq.ltf416x}, multiply through by $(-1)^{j - 1}$ and sum over $j$, using~\eqref{scope2}.
\end{proof}
In particular,
\begin{equation}
\begin{split}
\sum_{j = 1}^n {( - 1)^{j - 1} F_{j + t}^5 L_{j + t}} & = \frac{{( - 1)^{n + 1} }}{2} {F_{n + t}^2 F_{n + t + 1}^2 F_{n + t + 2}^2  + \frac12F_t^2 F_{t + 1}^2 F_{t + 2}^2 } \\
&\qquad + ( - 1)^n F_{n + t + 1}^4 F_{n + t}^2  - F_{t + 1}^4 F_t^2 ,
\end{split}
\end{equation}
\begin{equation}
\begin{split}
\sum_{j = 1}^n {( - 1)^{j - 1} L_{j + t}^5 F_{j + t}} & = \frac{{( - 1)^{n + 1} }}{10} {L_{n + t}^2 L_{n + t + 1}^2 L_{n + t + 2}^2  + \frac1{10}L_t^2 L_{t + 1}^2 L_{t + 2}^2 } \\
&\qquad + \frac{( - 1)^n}5 L_{n + t + 1}^4 L_{n + t}^2  - \frac15L_{t + 1}^4 L_t^2 ;
\end{split}
\end{equation}
with the special cases
\begin{align}
\sum_{j = 1}^n {( - 1)^{j - 1} F_j^5 L_j }  &= \frac{{( - 1)^{n + 1} }}{2}F_n^2 F_{n + 1}^2 \left( {F_{n + 1}^2  - F_n F_{n + 3} } \right),\\
\sum_{j = 1}^n {( - 1)^{j - 1} L_j^5 F_j }  &= \frac{{( - 1)^{n + 1} }}{{10}}L_n^2 L_{n + 1}^2 \left( {L_{n + 1}^2  - L_n L_{n + 3} } \right) + \frac{{14}}{5}.
\end{align}

\begin{proposition}
If $n$ and $t$ are integers, then
\begin{align}
\sum_{j = 1}^n {G_{j + t}^3 G_{j + t + 1}^3 }  &= \frac{1}{4}\left( {G_{n + t}^2 G_{n + t + 1}^2 G_{n + t + 2}^2  - G_t^2 G_{t + 1}^2 G_{t + 2}^2 } \right),\label{eq.m4e3d1l}\\
\sum_{j = 1}^n {\frac{1}{{G_{j + t - 1}^2 G_{j + t} G_{j + t + 1} G_{j + t + 2}^2 }}}  &= \frac{1}{4}\left( {\frac{1}{{G_t^2 G_{t + 1}^2 G_{t + 2}^2 }} - \frac{1}{{G_{n + t}^2 G_{n + t + 1}^2 G_{n + t + 2}^2 }}} \right)\label{eq.wg4kr52}.
\end{align}
\end{proposition}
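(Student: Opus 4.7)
The plan is to read both identities off the telescoping lemma~\eqref{eq.trkcg03}, which asserts
\[
G_j^2 G_{j + 1}^2 G_{j + 2}^2  - G_{j - 1}^2 G_j^2 G_{j + 1}^2  = 4G_j^3 G_{j + 1}^3,
\]
already established in the preceding lemma. The right-hand side is exactly $4$ times the summand of \eqref{eq.m4e3d1l}, while the left-hand side has the shape $f_{j+1}-f_j$ after the appropriate shift of index. So both identities should drop out essentially for free from the telescoping identity~\eqref{scope1}.

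For~\eqref{eq.m4e3d1l}, I would replace $j$ by $j+t$ in~\eqref{eq.trkcg03}, then apply~\eqref{scope1} with the choice $f_j = G_{j+t-1}^2 G_{j+t}^2 G_{j+t+1}^2$, noting that $f_{j+1} = G_{j+t}^2 G_{j+t+1}^2 G_{j+t+2}^2$. The right-hand side of~\eqref{scope1} is then $f_{n+1}-f_1 = G_{n+t}^2 G_{n+t+1}^2 G_{n+t+2}^2 - G_t^2 G_{t+1}^2 G_{t+2}^2$, and dividing by $4$ yields~\eqref{eq.m4e3d1l}.

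For~\eqref{eq.wg4kr52}, the key observation is that dividing~\eqref{eq.trkcg03} through by $G_{j-1}^2 G_j^4 G_{j+1}^4 G_{j+2}^2$ collapses the right-hand side to $4/\bigl(G_{j-1}^2 G_j G_{j+1} G_{j+2}^2\bigr)$, so that
\[
\frac{4}{G_{j - 1}^2 G_j G_{j + 1} G_{j + 2}^2 }
= \frac{1}{G_{j - 1}^2 G_j^2 G_{j + 1}^2} - \frac{1}{G_j^2 G_{j + 1}^2 G_{j + 2}^2}.
\]
I would shift $j \mapsto j+t$ and apply~\eqref{scope1} with $f_j = -1/\bigl(G_{j+t-1}^2 G_{j+t}^2 G_{j+t+1}^2\bigr)$, so that $f_{j+1}-f_j$ matches the right-hand side above. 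Telescoping gives~\eqref{eq.wg4kr52} after dividing by $4$.

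There is no real obstacle here: the only mild subtlety is the book-keeping for the second identity (getting the signs right and confirming that the cancellation $G_j^3 G_{j+1}^3 / (G_j^4 G_{j+1}^4) = 1/(G_j G_{j+1})$ produces exactly the summand on the left of~\eqref{eq.wg4kr52}). Of course, implicit throughout is the assumption that no $G_{j+t}$ appearing in a denominator vanishes, which is automatic for the Fibonacci and Lucas specializations provided the indices stay positive.
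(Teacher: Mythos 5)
Your proposal is correct and matches the paper's argument: the paper likewise obtains \eqref{eq.m4e3d1l} by telescoping \eqref{eq.trkcg03} via \eqref{scope1}, and \eqref{eq.wg4kr52} by "taking the reciprocal of each term" of \eqref{eq.trkcg03} — which is precisely your division by $G_{j-1}^2 G_j^4 G_{j+1}^4 G_{j+2}^2$ — and telescoping again. You have simply written out the bookkeeping (the choice of $f_j$ and the nonvanishing caveat) that the paper leaves implicit.
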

\begin{proof}
Identity~\eqref{eq.m4e3d1l} is a consequence of summing~\eqref{eq.trkcg03}. Identity~\eqref{eq.wg4kr52} follows from taking the reciprocal of each term of~\eqref{eq.trkcg03} and summing.
\end{proof}
In particular,
\begin{align}
\sum_{j = 1}^n {F_{j + t}^3 F_{j + t + 1}^3 }  &= \frac{1}{4}\left( {F_{n + t}^2 F_{n + t + 1}^2 F_{n + t + 2}^2  - F_t^2 F_{t + 1}^2 F_{t + 2}^2 } \right),\\
\sum_{j = 1}^n {L_{j + t}^3 L_{j + t + 1}^3 }  &= \frac{1}{4}\left( {L_{n + t}^2 L_{n + t + 1}^2 L_{n + t + 2}^2  - L_t^2 L_{t + 1}^2 L_{t + 2}^2 } \right);
\end{align}
and
\begin{align}
\sum_{j = 1}^n {\frac{1}{{F_{j + t - 1}^2 F_{j + t} F_{j + t + 1} F_{j + t + 2}^2 }}}  &= \frac{1}{4}\left( {\frac{1}{{F_t^2 F_{t + 1}^2 F_{t + 2}^2 }} - \frac{1}{{F_{n + t}^2 F_{n + t + 1}^2 F_{n + t + 2}^2 }}} \right),\\
\sum_{j = 1}^n {\frac{1}{{L_{j + t - 1}^2 L_{j + t} L_{j + t + 1} L_{j + t + 2}^2 }}}  &= \frac{1}{4}\left( {\frac{1}{{L_t^2 L_{t + 1}^2 L_{t + 2}^2 }} - \frac{1}{{L_{n + t}^2 L_{n + t + 1}^2 L_{n + t + 2}^2 }}} \right);
\end{align}
with the special cases
\begin{align}
\sum_{j = 1}^n {F_j^3 F_{j + 1}^3 }  &= \frac{1}{4}F_n^2 F_{n + 1}^2 F_{n + 2}^2, \label{eq.iowhr79}\\
\sum_{j = 1}^n {L_j^3 L_{j + 1}^3 }  &= \frac{1}{4}L_n^2 L_{n + 1}^2 L_{n + 2}^2  - 9\label{eq.wzvx9al};
\end{align}
and
\begin{align}
\sum_{j = 1}^n {\frac{1}{{F_j^2 F_{j + 1} F_{j + 2} F_{j + 3}^2 }}}  = \frac{1}{4}\left( {\frac{1}{4} - \frac{1}{{F_{n + 1}^2 F_{n + 2}^2 F_{n + 3}^2 }}} \right),\\
\sum_{j = 1}^n {\frac{1}{{L_j^2 L_{j + 1} L_{j + 2} L_{j + 3}^2 }}}  = \frac{1}{4}\left( {\frac{1}{144} - \frac{1}{{L_{n + 1}^2 L_{n + 2}^2 L_{n + 3}^2 }}} \right).
\end{align}
Identities~\eqref{eq.iowhr79} and~\eqref{eq.wzvx9al} were also obtained by Treeby~\cite{treeby16}.

\hrule

\hrule

\noindent Concerned with sequences: 
A000045, A000032

\end{document}